\theoremstyle{plain} 
\newtheorem{theorem}{Theorem} 
\newtheorem{scholium} [theorem]{Scholium}
\newtheorem{lemma} [theorem]{Lemma}
\theoremstyle{remark}
\newtheorem*{question} {Question}
\newcommand{\G}{\mathcal{G}}
\newcommand{\D}{\mathcal{D}}
\newcommand{\SHIFT}{\mathsf{S}}
\newcommand{\bcn}{\chi_{B}} 
\newcommand{\SSI}{\quad\longleftrightarrow\quad}
\newcommand{\ssi}{\ \leftrightarrow \ }
\newcommand{\infSub}[1]{[#1]^{\infty}}
\newcommand{\finSub}[1]{[#1]^{<\infty}}
\newcommand{\unf}[1]{\vec{#1}}
\newcommand{\segm}{\sqsubseteq}
\newcommand{\Eff}{\mathcal{F}(\infSub{\omega})}
\newcommand{\Tr}{\mathcal{T}}
\newcommand{\Refl}{\mathcal{R}}
\newcommand{\bai}{\omega^\omega}
\begin{document}

\title{Finite versus infinite: an insufficient shift}

\author{Yann Pequignot \thanks{The author gratefully acknowledges the support of Austrian Science Fund (FWF) through project \texttt{I~1238} and the support of the Swiss National Science Foundation (SNF) through grant \texttt{P2LAP2$\_$164904}.}}


%

\maketitle

\begin{abstract}
The shift graph $\G_{\SHIFT}$ is defined on the space of infinite subsets of natural numbers by letting two sets be adjacent if one can be obtained from the other by removing its least element. 
We show that this graph is not a minimum among the graphs of the form $\G_{f}$ defined on some Polish space $X$, where two distinct points are adjacent if one can be obtained from the other by a given Borel function $f:X\to X$. This answers the primary outstanding question from \cite{Kechris19991}.
\end{abstract}

%

A \emph{directed graph} is a pair $\G=(X,R)$ where $R$ is an irreflexive binary relation on $X$. A homomorphism from $\G=(X,R)$ to $\G'=(X',R')$ is a map $h:X\to X'$ such that $(x,y)\in R$ implies $(h(x),h(y))\in R'$ for all $x,y\in X$. A \emph{coloring} of $\G$ is a map $c:X\to Y$ such that $(x_{1},x_{2})\in R$ implies $c(x_{1})\neq c(x_{2})$ for all $(x_{1},x_{2})\in X\times X$. In case $X$ is a topological space, the \emph{Borel chromatic number} $\bcn(\G)$ of $\G$ is defined by
\[
\bcn(\G)=\min \{ |c(X)| \mid \text{$c:X\to Y$ is a Borel coloring of $\G$ in a Polish space $Y$}\},
\]
where $|c(X)|$ denotes the cardinality of the range of $c$.

In this note we only deal with graphs generated by a function. Let $X$ be a Polish space and $f:X\to X$ is a Borel map. We let $\D_{f}=(X,D_{f})$ be the directed graph given by
\[
x\mathrel{D_{f}}y \SSI x\neq y \land f(x)=y.
\]
We also consider its symmetric counterpart $\G_{f}=(X,R_{f})$ given by
\[
x\mathrel{R_{f}}y \SSI x\neq y \land (f(x)=y \lor f(y)=x).
\]
Notice that clearly $\bcn(\G_{f})=\bcn(\D_{f})$.

The following example has drawn considerable attention in the study of Borel chromatic numbers \cite{conley2014antibasis,di2006canonical,di2012shift,prisco2015basis}. Let $X=\infSub{\omega}$ be the set of infinite sets of natural numbers with topology induced from the Cantor space $2^{\omega}$ when $Y\subseteq \omega$ is identified with its characteristic function $\chi_{Y}:\omega\to 2$. The shift operation $\SHIFT:\infSub{\omega}\to \infSub{\omega}$ is the continuous map defined by $\SHIFT(Y)=Y\setminus \{\min Y\}$. While $\G_{\SHIFT}$ is an acyclic graph and has therefore chromatic number $2$, it follows from the Galvin--Prikry Theorem \cite{JSL:9194679} that $\bcn(\G_{\SHIFT})=\aleph_{0}$. 

Kechris, Solecki and Todor{\v{c}}evi{\'c} \cite[Problem 8.1]{Kechris19991} (see also \cite[Section 3]{prisco2015basis} and \cite{miller2008measurable}) asked whether the following is true: If $X$ is a Polish space and $f:X\to X$ is a Borel function, then exactly one of the following holds:
\begin{enumerate}
\item The Borel chromatic number of $\G_{f}$ is finite;
\item There is a continuous homomorphism from $\G_{\SHIFT}$ to $\G_{f}$.
\end{enumerate}

We show that the answer is negative, namely:

\begin{theorem}\label{mainThm}
There exists a Polish space $X$ together with a continuous finite-to-$1$ function $f:X\to X$ such that $\bcn(\G_{f})=\aleph_{0}$ and there is no Borel homomorphism from $\G_{\SHIFT}$ to $\G_{f}$.
\end{theorem}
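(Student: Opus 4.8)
Before constructing anything I would fix the logical shape of the statement. A Borel homomorphism $h\colon\G_{\SHIFT}\to\G_{f}$ pulls back Borel colorings, so it already forces $\bcn(\G_{\SHIFT})\le\bcn(\G_{f})$; since $\bcn(\G_{\SHIFT})=\aleph_{0}$ by Galvin--Prikry, the existence of such an $h$ entails $\bcn(\G_{f})=\aleph_{0}$. Thus the content of the theorem is a converse failure: I must exhibit a continuous finite-to-$1$ $f$ whose graph has $\bcn(\G_{f})=\aleph_{0}$ yet which is separated from $\G_{\SHIFT}$ by some invariant strictly finer than the Borel chromatic number and still monotone under Borel homomorphism. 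The plan therefore splits into (A) a construction of $f$, (B) a proof that $\bcn(\G_{f})=\aleph_{0}$, and (C) a proof that no Borel homomorphism $\G_{\SHIFT}\to\G_{f}$ exists.

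For (A) I would realize $X$ as the branch space of a finitely branching subtree of $\bai$ and let $f$ be a shift along branches, with the branching profile of the tree tuned by a rapidly growing sequence of parameters. Finite branching makes $f$ continuous and finite-to-$1$, and arranging that every branch has a finite backward orbit keeps the backward structure tame. The parameters are chosen so that, on one hand, the forward dynamics are \emph{mixing enough} to defeat every finite Borel coloring, and on the other hand the local appearance of the graph is so sparse that the shift cannot be canonically copied into it. Getting a single profile to serve both ends is the design problem underlying the whole proof.

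For (B) I would prove $\bcn(\G_{f})=\aleph_{0}$ by a Baire-category argument: assuming a Borel $n$-coloring $c$, I would find, using the topological largeness of the forward orbits built into the tree, a basic clopen set on which some color class is comeager and then produce inside it a monochromatic $f$-edge, contradicting that $c$ is a coloring. For (C) I would first use the Galvin--Prikry theorem to pass to a cube $\infSub{A}$ on which a hypothetical Borel homomorphism $h$ is continuous, and then apply a canonical-Ramsey (Pudl\'ak--R\"odl type) analysis to reduce $h\restriction\infSub{A}$ to a map determined by a fixed finite window of its argument. The edge condition $\{Y,\SHIFT Y\}\in R_{\SHIFT}$ then becomes a rigid overlap constraint on this window map, effectively a homomorphism from a fixed finite-window shift graph into $\G_{f}$; the sparseness and finite-to-$1$-ness engineered in (A) are what make this last homomorphism impossible.

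I expect the decisive obstacle to be exactly the tension visible in step (A): robust infinite Borel chromatic number tends to come from a Ramsey-style ubiquity of short monochromatic patterns, which is precisely the ubiquity that would let the shift be canonically reproduced inside $\G_{f}$ and thereby create the homomorphism we must forbid. Threading between these --- making the obstruction to finite coloring live on a set large for category yet structurally too thin to host the canonized shift windows of step (C), all through one continuous finite-to-$1$ function --- is where the real work lies, and where I would spend most of the effort verifying the two estimates against one another.
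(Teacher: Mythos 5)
Your proposal is a plan for an explicit construction, but none of its three parts is actually carried out, and the step on which everything hinges --- choosing the ``branching profile'' in (A) so that (B) and (C) simultaneously go through --- is exactly the step you yourself flag as the unresolved design problem. As it stands there is no candidate $f$, so there is nothing to verify in (B) or (C); both are conditional on unspecified properties of a tree that is never built. There are also concrete reasons to doubt the route: the Baire-category argument in (B) requires the forward dynamics of $f$ to interact with category in a way that has to be engineered, and the Pudl\'ak--R\"odl-style canonization in (C) reduces a map on $\infSub{\omega}$ to a finite-window map only when the target is essentially discrete, which the branch space of an infinite finitely branching tree is not. Most importantly, the paper states explicitly that no explicit example is known; the tension you identify between Ramsey-style ubiquity (forcing infinite Borel chromatic number) and sparseness (forbidding a copy of the shift) is precisely why a proof by construction is out of reach here.

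The idea your proposal is missing is that the theorem can be proved non-constructively by a definability argument. The paper restricts attention to graphs $\G_{\SHIFT}|F$ for $F$ in the standard Borel space $\Tr$ of closed shift-invariant subsets of $\infSub{\omega}$, and considers the classes $\mathcal{F}=\{F\in\Tr \mid \bcn(\G_{\SHIFT}|F)<\aleph_0\}$ and $\mathcal{N}=\{F\in\Tr \mid \text{there is no Borel homomorphism from } \G_{\SHIFT} \text{ to } \G_{\SHIFT}|F\}$, with $\mathcal{F}\subseteq\mathcal{N}$. Miller's characterization of finite Borel chromatic number makes $\mathcal{F}$ a $\mathbf{\Sigma}^1_2$ subset of $\Tr$, while Lemma~\ref{lemDiscrete} shows that the $\mathbf{\Delta}^1_2$-measurable map $P\mapsto \unf{P^{\complement}}$ reduces the set of better-binary-relations to $\mathcal{N}$. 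Since Marcone proved that the better-binary-relations form a $\mathbf{\Pi}^1_2$-complete, hence non-$\mathbf{\Sigma}^1_2$, set, $\mathcal{N}$ cannot coincide with $\mathcal{F}$ on the range of this reduction, so some $\unf{P^{\complement}}$ lies in $\mathcal{N}\setminus\mathcal{F}$; this proves the theorem (and the Scholium) without exhibiting any witness. This complexity computation replaces the entire construction you were planning, and it is the reason the theorem is a pure existence statement.
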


We do not have any explicit example witnessing the above existential statement. This is because our proof consists of showing that a certain subset of the set of graphs with the above property is a true $\mathbf{\Pi}^1_2$ set in some suitable standard Borel space. 

We can however be a bit more specific. If $P$ is a binary relation on $\omega$, let $\unf{P}$ be the closed subset of $\infSub{\omega}$ defined by 
\[
\unf{P}=\bigl\{(n_{i})_{i\in\omega}\in\infSub{\omega}\mid \forall i\in\omega \ n_{i}\mathrel{P}n_{i+1} \bigr\},
\]
where an element of $\infSub{\omega}$ is identified with the enumeration $(n_{i})_{i\in\omega}$ of its elements in strictly increasing order. If $\G=(X,R)$ is a directed graph and $Y\subseteq X$ let us denote by $\G|Y$ the restriction of $\G$ to $Y$ given by $(Y, R\cap (Y\times Y))$.

The proof of Theorem~\ref{mainThm} actually yields the following result.

\begin{scholium} \label{scholium}
There exists a binary relation $P$ on $\omega$ such that $\bcn(\G_{\SHIFT}|\unf{P})=\aleph_{0}$ and there is no Borel homomorphism from $\G_{\SHIFT}$ to $\G_{\SHIFT}|\unf{P}$.
\end{scholium}

Before proving Theorem~\ref{mainThm} we want to recall a definition and establish a simple but important lemma. A binary relation $P\subseteq A\times A$ on some set $A$ is called a \emph{better-binary-relation} if for all continuous map $\varphi:\infSub{\omega}\to A$, where $A$ is considered a discrete space, there exists $X\in \infSub{\omega}$ such that $\bigl(\varphi(X),\varphi(\SHIFT(X) )\bigr)\in P$, or in fewer words, if there is no continuous homomorphism from $\D_{\SHIFT}$ to $(A,P^{\complement})$, where $P^{\complement}=(A\times A)\setminus P$. This notion which first appeared in \cite{shelah1982better} is a straightforward generalization to arbitrary binary relations of that of \emph{better-quasi-order}\footnote{A \emph{better-quasi-order} is just a transitive better-binary-relation, as a better-binary-relation is necessarily reflexive.} due to Nash-Williams \cite{nash1965well}. For more on better-quasi-orders we refer the reader to \cite{simpson1985bqo, marcone1994foundations, CarroyYPFromWell} and to the author's PhD thesis \cite{PhDThesisYP}.

\begin{lemma}\label{lemDiscrete}
Let $P\subseteq \omega\times \omega$ be an irreflexive binary relation on $\omega$. Then the following are equivalent:
\begin{enumerate}
\item $P^{\complement}$ is not a better-binary-relation, \label{lemDiscrete1}
\item there exists a continuous homomorphism from $\D_{\SHIFT}$ to $(\omega,P)$, \label{lemDiscrete2}
\item there exists a continuous homomorphism from $\D_{\SHIFT}$ to $\D_{\SHIFT}|\unf{P}$. \label{lemDiscrete3}
\item there exists a Borel homomorphism from $\G_{\SHIFT}$ to $\G_{\SHIFT}|\unf{P}$. \label{lemDiscrete4}
\end{enumerate} 
\end{lemma}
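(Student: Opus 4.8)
The plan is to establish the equivalences by a cycle of implications, exploiting the fact that $\vec{P}$ is essentially the space of all ways to thread the relation $P$ together with iterations of the shift.

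First I would unpack the definitions to see the natural direction of implication. The cleanest observation is the relationship between $(\omega, P)$ and $\D_{\SHIFT}|\vec{P}$. Given a point $(n_i)_{i\in\omega}\in\vec{P}$, applying the shift yields $(n_{i+1})_{i\in\omega}$, which is again in $\vec{P}$ by the defining condition $n_i\mathrel{P}n_{i+1}$; moreover the map sending $(n_i)_{i\in\omega}\mapsto n_0$ (reading off the least element, i.e. $\min$) is continuous and sends the edge from $(n_i)_i$ to its shift precisely to the pair $(n_0,n_1)\in P$. This gives a ready-made continuous homomorphism from $\D_{\SHIFT}|\vec{P}$ to $(\omega,P)$, which is the backbone for comparing conditions \eqref{lemDiscrete2} and \eqref{lemDiscrete3}.

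The main work is the chain $\eqref{lemDiscrete1}\Rightarrow\eqref{lemDiscrete2}\Rightarrow\eqref{lemDiscrete3}\Rightarrow\eqref{lemDiscrete4}\Rightarrow\eqref{lemDiscrete1}$. For $\eqref{lemDiscrete1}\Rightarrow\eqref{lemDiscrete2}$ I would simply transcribe the definition: $P^{\complement}$ failing to be a better-binary-relation means exactly that there is a continuous homomorphism from $\D_{\SHIFT}$ to $(\omega,(P^{\complement})^{\complement})=(\omega,P)$, so this is almost by definition. For $\eqref{lemDiscrete2}\Rightarrow\eqref{lemDiscrete3}$ is the crux: given a continuous homomorphism $\varphi:\infSub{\omega}\to\omega$ from $\D_{\SHIFT}$ to $(\omega,P)$, so that $\bigl(\varphi(Y),\varphi(\SHIFT(Y))\bigr)\in P$ for all $Y$, I would build a homomorphism into $\vec{P}$ by the orbit map $Y\mapsto \bigl(\varphi(\SHIFT^{i}(Y))\bigr)_{i\in\omega}$. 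The defining condition of $\vec P$ holds at each coordinate precisely because $\varphi$ is a $P$-homomorphism, and this orbit map conjugates the shift on $\infSub{\omega}$ to the shift on $\vec P$, so it is a homomorphism of $\D_{\SHIFT}$; one must check continuity (which follows from continuity of $\varphi$ and of $\SHIFT$) and, to land in $\D_{\SHIFT}|\vec P$ rather than collapse edges, that the image sequence is genuinely strictly increasing, i.e. that it enumerates an infinite set. The subtlety here, and what I expect to be the main obstacle, is ensuring that the orbit sequence $\bigl(\varphi(\SHIFT^i(Y))\bigr)_i$ lies in $\infSub{\omega}$ at all: a priori $P$ need not force the values to increase or even to be distinct, so one may need either to pass to a subrelation or to replace $\varphi$ by a refined homomorphism whose orbit values are strictly increasing. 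I would address this by either arguing that $P$ irreflexive already guarantees consecutive values differ and then composing with an increasing reindexing, or, if that fails, by coding the index $i$ into the value to force strict monotonicity while preserving the $P$-relation through a suitably padded relation; this bookkeeping is where the real care lies.

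The implication $\eqref{lemDiscrete3}\Rightarrow\eqref{lemDiscrete4}$ is then a matter of passing from the directed, continuous setting to the symmetric, Borel one: a continuous homomorphism of directed graphs is in particular a Borel homomorphism, and since $x\mathrel{R_{\SHIFT}}y$ unpacks into $\SHIFT(x)=y$ or $\SHIFT(y)=x$, any $\D_{\SHIFT}$-homomorphism is automatically an $\G_{\SHIFT}$-homomorphism after symmetrizing, using that the restriction to $\vec P$ respects both directions. Finally, for $\eqref{lemDiscrete4}\Rightarrow\eqref{lemDiscrete1}$ I would contrapose: assuming $P^{\complement}$ \emph{is} a better-binary-relation, I want to rule out any Borel homomorphism $h:\G_{\SHIFT}\to\G_{\SHIFT}|\vec P$. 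Composing such an $h$ with the continuous projection $\min:\vec P\to\omega$ described above yields a Borel map $\infSub{\omega}\to\omega$ which, using the better-binary-relation property together with a Galvin--Prikry style argument (Borel maps are continuous on some $\infSub{M}$), produces on a suitable infinite $M$ a genuine continuous homomorphism witnessing that $P^{\complement}$ is not a better-binary-relation — a contradiction. The key technical input here is that Borel witnesses can be upgraded to continuous ones on a pure subspace, which is exactly the leverage the better-binary-relation formalism, defined via continuous maps, is designed to exploit.
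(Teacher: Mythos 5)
Your overall architecture coincides with the paper's: the cycle $(1)\Rightarrow(2)\Rightarrow(3)\Rightarrow(4)\Rightarrow(1)$, the orbit map $Y\mapsto \bigl(\varphi(\SHIFT^{i}(Y))\bigr)_{i\in\omega}$ for $(2)\Rightarrow(3)$, and the extraction of $\min\circ\Phi$ for $(4)\Rightarrow(1)$. However, there is a genuine gap at precisely the point you flag as the crux. For the orbit map to land in $\unf{P}\subseteq\infSub{\omega}$ you need the values $\varphi(\SHIFT^{i}(Y))$ to be \emph{strictly increasing} in $i$: an element of $\unf{P}$ is an infinite set whose increasing enumeration has consecutive elements $P$-related, so pairwise distinctness of the orbit values is not enough. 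Neither of your proposed fixes works. Composing with an ``increasing reindexing'' destroys the structure: after reordering, the pairs that are consecutive in the increasing enumeration are no longer the pairs $\bigl(\varphi(\SHIFT^{i}Y),\varphi(\SHIFT^{i+1}Y)\bigr)$ known to lie in $P$, so membership in $\unf{P}$ fails. Padding the relation replaces the target by $\D_{\SHIFT}|\unf{P'}$ for a different relation $P'$, which is not what the lemma asserts. The missing idea is Ramsey-theoretic: apply the Galvin--Prikry theorem to the Borel partition of $\infSub{\omega}$ into $\{X\mid \varphi(X)\leq\varphi(\SHIFT(X))\}$ and $\{X\mid\varphi(X)>\varphi(\SHIFT(X))\}$. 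The second piece cannot contain any $\infSub{Y}$, since it would yield an infinite strictly decreasing sequence $\varphi(X)>\varphi(\SHIFT(X))>\varphi(\SHIFT^{2}(X))>\cdots$ of natural numbers; hence after restricting $\varphi$ to a suitable $\infSub{Y}$ one may assume $\varphi(X)\leq\varphi(\SHIFT(X))$ everywhere, and irreflexivity of $P$ then upgrades this to strict inequality. Only after this restriction is the orbit map well defined.

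A smaller omission of the same kind occurs in $(4)\Rightarrow(1)$. A Borel homomorphism $\Phi$ of the \emph{symmetric} graphs only guarantees that for each $X$ either $\SHIFT(\Phi(X))=\Phi(\SHIFT(X))$ or $\SHIFT(\Phi(\SHIFT(X)))=\Phi(X)$; in the second case the pair $\bigl(\min\Phi(X),\min\Phi(\SHIFT(X))\bigr)$ is the reverse of a $P$-edge and need not lie in $P$. So before composing with $\min$ you must apply Galvin--Prikry once more, to this two-piece partition, to reduce to a homomorphism of the directed graphs --- in addition to the restriction making $\Phi$ continuous on some $\infSub{Z}$, which you do invoke. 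With these two Ramsey-theoretic repairs supplied, your contrapositive framing becomes equivalent to the paper's direct argument.
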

\begin{proof}
(\ref{lemDiscrete1}) $\ssi$ (\ref{lemDiscrete2}) follows from the definition of a better-binary-relation. 

(\ref{lemDiscrete2}) $\to$ (\ref{lemDiscrete3}): Assume that $\varphi: \infSub{\omega}\to \omega$ is a continuous homomorphism from $\D_{\SHIFT}$ to $(\omega,P)$. Since the usual order on $\omega$ is a better-quasi-order, by applying the Galvin--Prikry theorem to the Borel partition:
\[
\infSub{\omega}=\bigl\{X \bigm\vert \Phi(X)\leq\Phi(\SHIFT(X))\bigr\}\cup \bigl\{X\bigm \vert \Phi(X)>\Phi(\SHIFT(X))\bigr\},
\]
and eventually restricting $\varphi$ to $\infSub{Y}$ for some $Y\in\infSub{\omega}$, we can assume without loss of generality that $\varphi(X)\leq \varphi(\SHIFT(X))$ for every $X\in\infSub{\omega}$. As $P$ is irreflexive and $\varphi$ is a homomorphism, we actually have $\varphi(X)<\varphi(\SHIFT(X))$ for every $X\in\infSub{\omega}$. We define $\Phi:\infSub{\omega}\to \unf{P}$ by setting $\Phi(X)=\{\varphi(\SHIFT^{n}(X))\mid n\in\omega\}$ for every $X\in\infSub{\omega}$. Clearly $\Phi$ is a well defined continuous homomorphism from $\D_{\SHIFT}$ to $\D_{\SHIFT}|\unf{P}$ as desired. 

(\ref{lemDiscrete3}) $\to$ (\ref{lemDiscrete4}) is obvious.

(\ref{lemDiscrete4}) $\to$ (\ref{lemDiscrete2}): Suppose that $\Phi$ is a Borel homomorphism from $\G_{\SHIFT}$ to $\G_{\SHIFT}|\unf{P}$. Applying the Galvin--Prikry theorem to the Borel partition
\[
\infSub{\omega}=\bigl\{X \bigm\vert \SHIFT(\Phi(X))=\Phi(\SHIFT(X))\bigr\}\cup \bigl\{X\bigm \vert \SHIFT(\Phi(\SHIFT(X)))=\Phi(X)\bigr\},
\]
and eventually restricting $\Phi$ to $\infSub{Y}$ for some $Y\in\infSub{\omega}$, we can suppose without loss of generality that $\Phi$ is actually a homomorphism from $\D_{\SHIFT}$ to $\D_{\SHIFT}|\unf{P}$. By eventually restricting further $\Phi$ to $\infSub{Z}$ for some $Z\in\infSub{\omega}$, we can assume that $\Phi$ is continuous (\cite[section 6]{mathias1977happy}, \cite[Theorem 3.5]{simpson1985bqo} and \cite[Proposition 3.2]{promel1992wqo}). We then define $\varphi:\infSub{\omega}\to \omega$ by $\varphi(X)=\min \Phi(X)$ for all $X\in\infSub{\omega}$. Since $\Phi$ is a homomorphism from $\D_{\SHIFT}$ to $\D_{\SHIFT}$, we have $\varphi(\SHIFT(X))=\min \Phi(\SHIFT(X))=\min \SHIFT(\Phi(X))$, and as $\Phi(X)\in\unf{P}$ it follows that $\varphi(X)\mathrel{P} \varphi(\SHIFT(X))$ for all $X\in\infSub{\omega}$. Hence $\varphi$ is a homomorphism from $\D_{\SHIFT}$ to $(\omega, P)$ and clearly $\varphi$ is continuous, as desired. 
\end{proof}

\begin{proof}[Proof of Theorem~\ref{mainThm}]
We confine ourselves to the graphs $\G_{\SHIFT}|C$ obtained by restricting $\G_{\SHIFT}$ to some closed subset $C$ of $\infSub{\omega}$ closed under the shift operation, i.e. such that $\SHIFT(X)\in C$ for all $X\in C$. First some notation. Let $\finSub{\omega}$ be the set of finite sets of natural numbers. For $s\in \finSub{\omega}$ and $t\subseteq \omega$ let $s\segm t$ denote that $s$ is an \emph{initial segment} of $t$ with respect to the usual order on $\omega$, namely $s\segm t$ if and only if $s=t$ or $\exists k\in t$ such that $s=\{n\in t\mid n<k\}$.

We consider the Effros Borel space $\Eff$ of closed subsets of $\infSub{\omega}$ (see \cite[12.C]{kechris1995classical}). The $\sigma$-algebra of Borel sets of $\Eff$ is generated by the sets of the form 
\[
\{F\in \Eff \mid F\cap [s] \neq \emptyset\}
\]
where $[s]=\{X\in\infSub{\omega}\mid s\segm X\}$, for $s\in \finSub{\omega}$. We identify every closed subset $F$ of $\infSub{\omega}$ with the pruned tree $T_{F}=\{t\in\finSub{\omega}\mid \exists X\in F \ t\segm X\}$ that we view as an element of the product space $2^{\finSub{\omega}}$, with $\finSub{\omega}$ discrete. Notice that the Effros Borel structure on $\Eff$ coincides with the Borel structure induced by $2^{\finSub{\omega}}$ via this identification. Let $\Tr$ be the set of closed sets $F\in\Eff$ such that $\SHIFT(X)\in F$ for all $X\in F$. Clearly $\Tr$ is Borel in $\Eff$ since
\[
F\in \Tr \SSI \forall t \in \finSub{\omega} \ \forall n [(n <\min t \land \{n\}\cup t\in T_{F}) \to   t\in T_{F}].
\]
We henceforth work within the standard Borel space $\Tr$ (\cite[(13.4)]{kechris1995classical}). The two subsets of $\Tr$ that we are interested in are the following: 
\begin{align*}
\mathcal{N}=&\big\{F\in \Tr \mid \text{there is no Borel homomorphism from $\G_{\SHIFT}$ to $\G_{\SHIFT}|F$} \big\},\\
\mathcal{F}=&\big\{F\in \Tr \mid \bcn(\G_{\SHIFT}|F)<\aleph_{0} \big\}.
\end{align*}

Clearly $\mathcal{F}\subseteq \mathcal{N}$, as the composition of a coloring with a homomorphism is again a coloring and $\bcn(\G_\SHIFT)=\aleph_0$. First we observe that $\mathcal{F}$ is a $\mathbf{\Sigma}^{1}_{2}$ set in $\Tr$. While this can be seen by a direct Tarski--Kuratowski computation on the definition of $\mathcal{F}$, we find it easier to use the neat characterization given by Miller \cite[Thm 2.1]{miller2008measurable} which gives: $F\in \mathcal{F}$ if and only if there exists a Borel set $B\subseteq F$ such that for all $X\in F$ there exist $m,n\in\omega$ such that $\SHIFT^m(X)\in B$ and $\SHIFT^n(X)\not\in B$. We fix a coding of Borel subsets of the Polish space $\infSub{\omega}$, see \cite[(35.B)]{kechris1995classical}: let $D$ be a $\mathbf{\Pi}^1_1$ subset of $\omega^\omega$ and $W$ be a $\mathbf{\Delta}^1_1$ subset of $D\times \infSub{\omega}$ such that $\{W_d \mid d\in D\}$, where $W_d=\{X\mid (d,X)\in W\}$, is the set of all Borel subsets $\infSub{\omega}$. 
We get
\begin{align*}
F\in\mathcal{F} \SSI & \exists d\in \bai \bigg( d\in D\\
&\land \forall X \in\infSub{\omega} \Big[X\in F \to \big( \exists n \ \SHIFT^{n}(X)\in W_{d} \land \exists n \ \SHIFT^{n}(X)\notin W_{d}\big)\Big]\bigg).
\end{align*}
This clearly gives a $\mathbf{\Sigma}^{1}_{2}$ definition of $\mathcal{F}$ in $\Tr$. 

Next we show that the inclusion $\mathcal{F}\subsetneq \mathcal{N}$ is strict as witnessed by a closed set of the form $\unf{P^\complement}$ for some reflexive binary relation $P$ on $\omega$. This will prove Scholium~\ref{scholium} and finish the proof.

To achieve this, we rely on a deep result due to Marcone \cite{marcone1994foundations,marcone1995complete} that we now recall. The set $\mathcal{B}=\{Q\subseteq \omega\times\omega \mid \text{$Q$ is a better-binary-relation} \}$ is a $\mathbf{\Pi}^{1}_{2}$-complete subset of the compact Polish space $\Refl=\{P\subseteq \omega\times \omega \mid \text{$P$ is reflexive}\}$, where $\Refl$ is endowed with the topology induced by the product space $2^{\omega\times\omega}$, so in particular $\mathcal{B}$ is not $\mathbf{\Sigma}^{1}_{2}$ in $\Refl$. For every reflexive $P\subseteq \omega\times\omega$ let 
\[
f(P)=\bigl\{(n_{i})_{i\in\omega}\in\infSub{\omega}\bigm\vert \forall i \ \neg n_{i} \mathrel{P} n_{i+1} \bigr\}=\unf{P^{\complement}}.
\]
This defines a $\mathbf{\Delta}^{1}_{2}$-measurable reduction $f:\Refl \to \Tr$ from $\mathcal{B}$ to $\mathcal{N}$. To see that $f^{-1}(\mathcal{N})=\mathcal{B}$, let $P\subseteq \omega\times \omega$ be a reflexive relation. Then $P^{\complement}$ is an irreflexive relation and by Lemma~\ref{lemDiscrete}, $P$ is a better-binary-relation if and only if $f(P)=\unf{P^{\complement}}\in\mathcal{N}$.

To see that $f$ is $\mathbf{\Delta}^{1}_{2}$-measurable notice that for every basic open set $[s]=\{X\in\infSub{\omega}\mid s\segm X\}$ of $\infSub{\omega}$ we have 
\[
f(P)\cap [s]\neq\emptyset \SSI \exists (n_{i})_{i\in\omega}\in\infSub{\omega} \bigl(\forall i \ \neg n_{i}\mathrel{P} n_{i+1} \land s\segm (n_{i})_{i\in\omega}\bigr).
\]
It follows that for every Borel subset $C$ of $\Tr$ the set $f^{-1}(C)$ belongs to the $\sigma$-algebra generated by $\mathbf{\Sigma}^{1}_{1}$, and \textit{a fortiori} to $\mathbf{\Delta}^{1}_{2}$. 

Suppose towards a contradiction that $f(P)\in \mathcal{N}$ implies $f(P)\in \mathcal{F}$ for all $P\in \Refl$. Then it follows that $f^{-1}(\mathcal{F})=\mathcal{B}$. But since $\mathcal{F}$ is $\mathbf{\Sigma}^{1}_{2}$ and $\mathbf{\Sigma}^{1}_{2}$ is closed under preimages by $\mathbf{\Delta}^{1}_{2}$-measurable functions \cite[(37.3)]{kechris1995classical}, we get that $\mathcal{B}$ is $\mathbf{\Sigma}^{1}_{2}$ contradicting Marcone's Theorem. Therefore there exists some $P\in \Refl$ such that $f(P)\in \mathcal{N}\setminus \mathcal{F}$ as desired.
 \end{proof}

It would be very interesting to find an explicit example of a graph $\G_f$ whose existence is guaranteed by Theorem~\ref{mainThm}. Notice that by a direct application of a result due to Pouzet \cite[Theorem 7]{pouzet1993graphs} (see also \cite[Theorem 1.8]{marcone1994foundations}), the binary relation $P$ in Scholium~\ref{scholium} can be chosen such that $P^\complement=(\omega\times \omega) \setminus P$ is actually a quasi-order, and therefore a better-quasi-order by Lemma~\ref{lemDiscrete}. We end this note by giving an example of a graph $\G_f$ generated in a similar fashion from a countable better-quasi-order and for which we do not know the Borel chromatic number.

Consider the set $2^{<\omega}$ of finite binary words equipped with the subword ordering, i.e.
\begin{align*}
u\preccurlyeq v \SSI &\text{there exists a strictly increasing map $h:|u|\to |v|$}\\
&\text{such that for every $i<|u|$ we have $u(i)=v(h(i))$}, 
\end{align*}
where $|u|$ denotes the length of $u\in 2^{<\omega}$. Consider the closed subspace 
\[
X=\bigl\{(u_n)_{n\in\omega}\in (2^{<\omega})^\omega\bigm\vert \forall n \ u_n \npreccurlyeq u_{n+1}\bigr\}
\]
of the product space $(2^{<\omega})^\omega$, where $2^{<\omega}$ is discrete, and define the corresponding shift operation $\SHIFT': X\to X$, $(u_n)_{n\in\omega}\mapsto (u_{n+1})_{n\in\omega}$. Since $(2^{<\omega}, \preccurlyeq)$ is a better-quasi-order,
 there is no Borel homomorphism from $\G_\SHIFT$ to $\G_{\SHIFT'}$ as the proof of Lemma~\ref{lemDiscrete} also shows. We however ask the following:
 
\begin{question}  
What is the Borel chromatic number of $\G_{\SHIFT'}$?
\end{question}

 \section*{Acknowledgements}
I would like to thank Benjamin D. Miller for numerous pleasant and stimulating discussions in Vienna. I also wish to thank Rapha\"el Carroy and Gianluca Basso for making useful suggestions and pointing out several misprints.

\end{document}